\documentclass{svproc}

\usepackage{amsfonts} 
\usepackage{amsmath}
\usepackage{amsopn}
\usepackage{amssymb}
\usepackage{booktabs}
\usepackage{caption}
\usepackage{commath}
\usepackage{csvsimple}
\usepackage{enumitem}
\usepackage[T1]{fontenc}
\usepackage{hyperref}
\usepackage{mathtools}
\usepackage{subcaption}
\usepackage{tikz}
\usepackage{thmtools}
\usepackage{slantsc}
\usepackage{nccmath}
\usepackage{svg}
\usepackage{cleveref}
\usepackage[utf8]{inputenc}

\usepackage{pgfplots}
\usepackage{pgfplotstable}
\pgfplotsset{compat=1.18}
\usepgfplotslibrary{groupplots}

\definecolor{ferngreen}{HTML}{789024}
\definecolor{winered}{HTML}{A63852}
\definecolor{perfumepurple}{HTML}{c0affb}
\definecolor{apricotorange}{HTML}{e6a176}
\definecolor{orientblue}{HTML}{00678a}
\definecolor{downygreen}{HTML}{5eccab}

\definecolor{cq10}{HTML}{000000}
\definecolor{cq11}{HTML}{0072B2}
\definecolor{cq13}{HTML}{009E73}
\definecolor{cq15}{HTML}{D55E00}

\allowdisplaybreaks

\newcommand{\getlastpoint}[1]{%
  \pgfplotstablegetrowsof{#1}%
  \pgfmathtruncatemacro{\lastrow}{\pgfplotsretval-1}%
  \pgfplotstablegetelem{\lastrow}{cost}\of{#1}%
  \edef\lastX{\pgfplotsretval}%
  \pgfplotstablegetelem{\lastrow}{RMSE}\of{#1}%
  \edef\lastY{\pgfplotsretval}%
}

\newcommand{\setstartX}{%
  \begingroup
  \pgfkeys{/pgf/fpu=true, /pgf/fpu/output format=fixed}%
  \pgfmathsetmacro{\startX}{2.5e6}%
  \pgfmathsmuggle\startX
  \endgroup
}

\newtheorem{assumption}{\upshape Assumption}
\crefname{assumption}{assumption}{assumptions}
\crefname{property}{property}{properties}

\DeclarePairedDelimiter\normZZZZ{\lVert}{\rVert}
\DeclarePairedDelimiter\absZZZZ{\lvert}{\rvert}

\DeclareSymbolFont{bbold}{U}{bbold}{m}{n}
\DeclareSymbolFontAlphabet{\mathbbold}{bbold}

\newcommand{\floor}[1]{\left\lfloor #1 \right\rfloor} 

\newenvironment{proofof}[1]{%
  \renewcommand{\proofname}{#1}%
  \begin{proof}%
}{%
  \end{proof}%
}

\begin{document}


\titlerunning{Single-ensemble multilevel McKean--Vlasov}
\authorrunning{Arne Bouillon \and Giovanni Samaey}
\tocauthor{Arne Bouillon and Giovanni Samaey}
\title{Convergence of a single-ensemble multilevel scheme for McKean--Vlasov SDEs}
\author{Arne Bouillon \and Giovanni Samaey}
\institute{NUMA research group, Department of Computer Science, KU Leuven, Leuven, Belgium (\href{mailto:arne.bouillon@kuleuven.be}{arne.bouillon@kuleuven.be}, \href{mailto:giovanni.samaey@kuleuven.be}{giovanni.samaey@kuleuven.be})}
\maketitle

\begin{abstract}
    Numerically solving McKean--Vlasov stochastic differential equations is computationally challenging due to the compounding costs of discretizing in time and in the distribution of the solution. Multilevel ideas have been proposed to provide speed-ups. In this work, we study the multilevel Monte Carlo method proposed by Ricketson (2015) for equations whose drift and diffusion terms depend on the law of the solution~$X_t$ through the expectation~$\mathbb E[R(X_t)]$. The scheme follows the \emph{single-ensemble} paradigm, where particles interact across levels at each timestep. While cross-level feedback makes this scheme attractive in practice, the correlations it introduces have so far confined its cost-error analysis to a model problem with linear drift, deterministic diffusion, and~$R$ the identity. We use additional coarse particles to enforce geometrically decaying coupling errors towards the coarser levels. This allows us to prove our main contribution, an~$L^p$-error of~$\mathcal O(\epsilon)$ at cost~$\mathcal O(\epsilon^{-2-\delta})$ for any~$p\ge2$ and~$\delta>0$ (with a constant that grows as~$\delta\to0$), assuming only global Lipschitz bounds on the drift, the diffusion, and~$R$. An exploratory experiment is consistent with the derived rates and suggests that in practice the constant does not grow significantly for small $\delta$. Our methodology and proof strategy may also be useful for other single-ensemble multilevel schemes, such as multilevel ensemble~Kalman~filters.

    \keywords{McKean--Vlasov SDEs, multilevel Monte Carlo, single-ensemble}
\end{abstract}

\section{Introduction}
    We consider the stochastic differential equation (SDE)
    \begin{equation} \label{eq:mckv}
        \mathrm dX_t = b(X_t, \mathbb E[R(X_t)])\,\mathrm dt + \sigma(X_t, \mathbb E[R(X_t)])\,\mathrm dW_t, \qquad X_0 \sim \rho_0,
    \end{equation}
    where $X_t\in\mathbb R^d$ represents a state variable at time $t$, $\rho_0$ specifies the initial distribution, $R\colon\mathbb R^d\to\mathbb R^{d_R}$ is a function, and $b$ and $\sigma$ represent the SDE's drift and diffusion terms, respectively. This equation is of McKean--Vlasov type, as its evolution depends on the law of its state and not only on the state itself. McKean--Vlasov equations arise in various applications, including the \mbox{social~sciences}~\cite{carmonaProbabilisticApproachMean2016,tembineMeanFieldDifference2011}, biology \cite{talayNewMcKeanVlasov2020,zhuHybridRisksensitiveMeanfield2011}, and plasma physics \cite{fournierPropagationChaosLandau2016}; specifically, \cref{eq:mckv} appears in models for muscle contraction \cite{frankNonlinearFokkerPlanckEquations2005,shimizuPhenomenologicalEquationsMotion1972} and finance \cite{loSimpleAnalyticalModel2012}, and in particle-in-cell methods for the Vlasov--Poisson system used in plasma physics \cite{vayMeshRefinementParticleincell2002} -- see \cite[section 5.3]{ricketsonMultilevelMonteCarlo2015}.

    Throughout the paper, we consider $b$ and $\sigma$ independent of the time $t$, but we note that our results extend to time-dependent coefficients.

    Numerical simulation of \cref{eq:mckv} must address two sources of intractability: not only must time be discretized (as in regular SDEs), the expectation $\mathbb E[R(X_t)]$ must also be approximated. This can be done by simulating an \emph{ensemble} of $J$ paths (\emph{particles}) $\mathbf x_t \coloneqq \{X_t^j\}_j$ of \cref{eq:mckv} in parallel and using their empirical distribution instead of the true distribution of $X_t$. In this way, all particles interact at each time step. That is, we approximate
    \begin{equation}
        \mathbb E[R(X_t)] \approx \frac1{|\mathbf x_t|}\sum\nolimits_{X_t^j\in\mathbf x_t} R(X_t^j) \eqqcolon E_R(\mathbf x_t)
    \end{equation}
    and after an Euler--Maruyama time discretization with timestep $\Delta t$ arrive at
    \begin{equation} \label{eq:mc}
        X_{n+1}^j = X_n^j + b(X_n^j, E_R(\mathbf x_n))\Delta t + \sigma(X_n^j, E_R(\mathbf x_n)) \Delta W_n^j,
    \end{equation}
    a Monte Carlo scheme. The need to discretize in both time and distribution causes the computational cost to rise sharply as the required accuracy in approximating \cref{eq:mckv} tightens: to improve the error, we must simulate more particles that \emph{each} take more (smaller) timesteps. These costs multiply.

    In this work, we show that a \emph{multilevel Monte Carlo} scheme -- which simulates particles with different timesteps, all interacting together, to reduce computational cost -- can be analyzed rigorously under global Lipschitz assumptions. We first discuss this and related schemes in \cref{sec:intro:mlmc} and then review our contributions in detail in \cref{sec:intro:obj}.

    \subsection{Multilevel Monte Carlo} \label{sec:intro:mlmc}
    Various multilevel Monte Carlo (MLMC) \cite{giles2008,heinrichMultilevelMonteCarlo2001a} techniques have been proposed to leverage the efficiency of cheap, coarse simulations while still achieving the accuracy of expensive, fine simulations. To introduce MLMC, let $Q=Q_\infty$ be an intractable random variable and let $\{Q_\ell\}_{\ell=0}^L$ be a sequence of random variables where, as the \emph{level} $\ell$ increases, the approximation becomes more accurate but~$Q_\ell$ becomes more expensive to simulate. MLMC estimates the expected value of~$Q$ by the telescopic sum
    \begin{equation} \label{eq:mlmc}
    \begin{aligned}
        {\mathbb{E}}[Q] \approx {\mathbb{E}}[Q_L] &= {\mathbb{E}}[Q_0] + \sum\nolimits_{\ell=1}^L {\mathbb{E}}[Q_\ell - Q_{\ell-1}]\\
        &\approx \frac1{J_0}\sum\nolimits_{j=1}^{J_0}Q_0^j + \sum\nolimits_{\ell=1}^L\frac1{J_\ell}\sum\nolimits_{j=1}^{J_\ell}(Q_\ell^j - \widetilde Q_{\ell}^j),
    \end{aligned}
    \end{equation}
    where $Q_\ell^j$ and $\widetilde Q_{\ell}^j$ are coupled samples from $Q_\ell$ and $Q_{\ell-1}$, respectively. Good coupling makes the difference $Q_\ell^j - \widetilde Q_\ell^j$ a low-variance estimator, such that $J_\ell$ can decrease quickly with $\ell$. This approach often results in asymptotically lower costs for approximating ${\mathbb{E}}[Q]$ to a given accuracy than simulating $Q_L$ directly.
    
    MLMC methods for McKean--Vlasov equations combine many samples at coarse levels with fewer at fine levels. A particle's level determines its timestep and hence its cost. Current algorithms roughly divide into three~\mbox{categories}.
    
    First, what we will call \emph{single-ensemble} methods use one mixed-level ensemble, with particles at different levels interacting throughout the simulation with estimations like \cref{eq:mlmc}. This paradigm includes \cite{ricketsonMultilevelMonteCarlo2015}, which considers equations of the exact form of \cref{eq:mckv}; a cost-error analysis is provided for the special case
    \begin{equation}
        \mathrm dX_t = (AX_t + B{\mathbb{E}}[X_t])\,\mathrm dt + \sigma(t)\,\mathrm dW_t,
    \end{equation}
    finding that ${\mathbb{E}}[\varphi(X_T)]$ for Lipschitz $\varphi$ can be approximated to $L^1$-error $\epsilon$ at cost $\mathcal O(\epsilon^{-2}\absZZZZ{\log\epsilon}^5)$, as opposed to $\mathcal O(\epsilon^{-3})$ in the single-level case. Similarly in the single-ensemble family are a group of methods for discrete-time processes (i.e., with fixed timestep $\Delta t$) such as the ensemble Kalman filter that vary drift and diffusion accuracy instead of $\Delta t$ across levels; see, e.g., \cite{bouillonSingleEnsembleMultilevelMonte2026a,chernov2021,hoelMultilevelEnsembleKalman2016}. These methods have cost-error bounds outside the linear regime, but with an error that is $\mathcal O(\epsilon\absZZZZ{\log\epsilon}^N)$, where $N$ is the number of timesteps. While this logarithmic factor does not manifest in experiments, it prevents a similar cost-error analysis for continuous-time McKean--Vlasov SDEs. In general, single-ensemble methods are seen as very challenging to analyze due to the correlations that are introduced by the cross-level interactions \cite{hoelMultiindexEnsembleKalman2022b,szpruchIterativeMultilevelParticle2019c}.

    Second, \emph{multiple-ensemble} algorithms simulate a large number of single-level ensembles like \cref{eq:mc}, most at coarse levels, coupled through shared randomness. The telescoping sum in \cref{eq:mlmc} is only applied at the end, to approximate a quantity of interest over $\mathrm{Law}(X_t)$. Such methods include \cite{baoMilsteinSchemesAntithetic2024,botija-munozMultilevelMonteCarlo2023,haji-aliMultilevelMultiindexMonte2018,hoelMultilevelEnsembleKalman,szpruchAntitheticMultilevelSampling2021a}. This approach makes analysis more viable, as the cross-level correlations are removed, and also facilitates extensions such as antithetic \cite{baoMilsteinSchemesAntithetic2024,benrachedMultilevelImportanceSampling2024a,szpruchAntitheticMultilevelSampling2021a} and multi-index \cite{haji-aliMultilevelMultiindexMonte2018,hoelMultiindexEnsembleKalman2022b} MLMC, and rare-event sampling \cite{benrachedMultilevelImportanceSampling2024a}. Various $\mathcal O(\epsilon^{-2})$ or near-$\mathcal O(\epsilon^{-2})$ cost-error results have been obtained for these algorithms. A numerical experiment in \cite{hoelMultilevelEnsembleKalman} compares single- and multiple-ensemble methods and finds similar performance, with a slight advantage for the single-ensemble algorithm.

    Third, iterative algorithms treat the interaction term as fixed so that they can simulate classical independent MLMC paths. Based on a multilevel estimator, the interaction term is then updated and the process is repeated until convergence. Examples in this class include \cite{belomestnyIterativeMultilevelDensity2019b,hutzenthalerMultilevelPicardApproximations2022,neufeldMultilevelPicardApproximations2026,szpruchIterativeMultilevelParticle2019c}.

    We note that the state of the art in the latter two families is not directly comparable in scope to \cite{ricketsonMultilevelMonteCarlo2015}'s single-ensemble method and to the present work. They typically target more general McKean--Vlasov formulations, with coefficients that may depend on $\mathrm{Law}(X_t)$ in more general ways than through the expectation $\mathbb E[R(X_t)]$. Some complexity bounds were obtained under correspondingly stronger assumptions. For instance, the results in \cite{baoMilsteinSchemesAntithetic2024,haji-aliMultilevelMultiindexMonte2018} require variance reduction rates that, to our knowledge, have not yet been proven to hold~in~\mbox{general}.\hspace{-1cm}

    \subsection{Objectives and contributions} \label{sec:intro:obj}
    The aim of this work is to show that, with only two minor algorithmic changes and with an arbitrarily small penalty to the asymptotic cost, the single-ensemble method from \cite{ricketsonMultilevelMonteCarlo2015} is amenable to a rigorous cost-error analysis.
    
    By introducing additional coarse particles to force the coupling errors between levels to decay geometrically, the multilevel estimation error can be controlled; after that, the remaining analysis is largely classical. We show that, for ${\mathbb{E}}[\absZZZZ{X_0}^p]<\infty$ and globally Lipschitz $b$, $\sigma$, $R$, and $\varphi$, a quantity of interest ${\mathbb{E}}[\varphi(X_T)]$ of \cref{eq:mckv} at fixed time $T$ can be approximated with MLMC to $L^p$-error $\mathcal O(\epsilon)$ at cost $\mathcal O(\epsilon^{-2-\delta})$ for any $p\ge2$ and $\delta > 0$. The constant in the error bound grows as $\delta\to0$, but an experiment finds that empirical errors follow the predicted rates without this penalty. This adapted scheme and its convergence analysis are presented in \cref{sec:scheme,sec:analysis}, respectively. We then provide an illustrative numerical experiment in \cref{sec:num}, followed by a discussion of our results in \cref{sec:disc}.

    With our work, we hope to strengthen the foundation of single-ensemble methods and reduce the degree to which their analysis is an obstacle to further development. While we purposefully use a simple problem class here (interaction term ${\mathbb{E}}[R(X_t)]$, Lipschitz assumptions), we believe that our methodology may apply to more general settings such as multilevel ensemble~Kalman~\mbox{filters}~\cite{bouillonSingleEnsembleMultilevelMonte2026a,chernov2021,hoelMultilevelEnsembleKalman2016}.

\section{The multilevel scheme} \label{sec:scheme}
    The algorithm we consider is a slight modification of the multilevel scheme from \cite{ricketsonMultilevelMonteCarlo2015}. That method assigns particles to various levels. Level-0 particles $\mathbf x_n^0 \coloneqq \{X_n^{0,j}\}_{j=1}^{J_0}$ follow \cref{eq:mc} for a large timestep $\Delta t_0$:
    \begin{subequations} \label{eq:scheme}
    \begin{equation}
        X_{n+1}^{0,j} = X_n^{0,j} + b(X_n^{0,j}, E_R(\mathbf x_n^0))\Delta t_0 + \sigma(X_n^{0,j}, E_R(\mathbf x_n^0)) \Delta W_n^{0,j}.
    \end{equation}
    To correct these inaccurate particles, pairs of \emph{fine} particles $\mathbf x_n^\ell \coloneqq \{X_n^{\ell,j}\}_{j=1}^{J_\ell}$ and \emph{coarse} particles $\mathbf{\widetilde x}_n^\ell \coloneqq \{\widetilde X_n^{\ell,j}\}_{j=1}^{J_\ell}$ on levels $\ell\in\{1, \ldots, L\}$ are coupled through shared Brownian motion; coupled particles differ only through their timesteps ($\Delta t_\ell$ and $\Delta t_{\ell-1}$) and interaction terms ($E_R^{\mathrm{ML}}(\mathbf x_n^{{\mathrm{ML}},\ell})$ and $E_R^{\mathrm{ML}}(\mathbf x_n^{{\mathrm{ML}},\ell-1})$):
    \begin{equation}
    \begin{aligned}
        X_{n+1}^{\ell,j} &= X_n^{\ell,j} + b(X_n^{\ell,j}, E_R^{\mathrm{ML}}(\mathbf x_n^{{\mathrm{ML}},\ell}))\Delta t_\ell \\
        &\quad + \sigma(X_n^{\ell,j}, E_R^{\mathrm{ML}}(\mathbf x_n^{{\mathrm{ML}},\ell})) \Delta W_n^{\ell,j}, && 1\le\ell\le L,\\[1ex]
        \widetilde X_{n+1}^{\ell,j} &= \widetilde X_n^{\ell,j} + b(\widetilde X_n^{\ell,j}, E_R^{\mathrm{ML}}(\mathbf x_n^{{\mathrm{ML}},\ell-1}))\Delta t_{\ell-1} \\
        &\quad + \sigma(\widetilde X_n^{\ell,j}, E_R^{\mathrm{ML}}(\mathbf x_n^{{\mathrm{ML}},\ell-1})) \Delta \widetilde W_n^{\ell,j}, && 1\le\ell\le L.
    \end{aligned}
    \end{equation}
    \end{subequations}
    Consider a domain $n\Delta t_\ell\in[0, T]$. In~\cref{eq:scheme},
    \begin{equation}
        \Delta W_n^{\ell,j} = W_{(n+1)\Delta t_\ell}^{\ell,j} - W_{n\Delta t_\ell}^{\ell,j} \qquad \text{and} \qquad \Delta \widetilde W_n^{\ell,j} = W_{(n+1)\Delta t_{\ell-1}}^{\ell,j} - W_{n\Delta t_{\ell-1}}^{\ell,j}
    \end{equation}
    for some underlying Brownian motion $W^{\ell,j}$. The paths $X^{\ell,j}$ and $\widetilde X^{\ell,j}$ are coupled and use $X_0^{\ell,j} = \widetilde X_0^{\ell,j}\sim\rho_0$ for some given $\rho_0$, but their time points do not coincide. We choose timesteps $\Delta t_{\ell-1} = a\Delta t_\ell$ for some $a\in\mathbb R$ with $a>1$ (generalizing the $\Delta t_{\ell-1}=2\Delta t_\ell$ that is assumed in \cite{ricketsonMultilevelMonteCarlo2015}).

    Two aspects are as yet undefined, and they are the areas where we deviate algorithmically from \cite{ricketsonMultilevelMonteCarlo2015}. Firstly, we define multilevel ensembles as
    \begin{equation}
        \mathbf x_n^{{\mathrm{ML}},\ell} \coloneqq \left\{\mathbf x_{\floor{n/{a^\ell}}}^0, (\mathbf x_{\floor{n/{a^{\ell-1}}}}^1, \mathbf{\widetilde x}_{\floor{n/{a^\ell}}}^1), \ldots, (\mathbf x_n^\ell, \mathbf{\widetilde x}_{\floor{\frac na}}^\ell)\right\}
    \end{equation}
    and corresponding multilevel estimates of the interaction term as
    \begin{equation} \label{eq:ml-int}
        E_R^{\mathrm{ML}}(\mathbf x_n^{{\mathrm{ML}},\ell}) = E_R(\mathbf x_{\floor{n/{a^\ell}}}^0) + \sum\nolimits_{\ell'=1}^\ell\biggl(E_R\Bigl(\mathbf x_{\floor{n/{a^{\ell-\ell'}}}}^{\ell'}\Bigr) - E_R\Bigl(\mathbf{\widetilde x}_{\floor{n/{a^{\ell-\ell'+1}}}}^{\ell'}\Bigr)\biggr).
    \end{equation}
    In other words, a particle at level $\ell$ interacts only with particles at levels $\ell$ and below, and particles' paths are seen as piecewise constant by higher-level particles. In \cite{ricketsonMultilevelMonteCarlo2015}, lower-level paths were interpolated instead of frozen at their last value. Secondly, our specific choice of $J_\ell$ -- using more coarse particles than the original algorithm -- will be detailed in \cref{thm:main} and is crucial to our analysis.

    Taking a step back, the goal of simulating McKean--Vlasov equations is often to approximate a quantity of interest (QoI) ${\mathbb{E}}[\varphi(X_T)]$ for some function $\varphi$. The multilevel QoI estimator is $E_\varphi^{\mathrm{ML}}(\mathbf x_N^{{\mathrm{ML}},L})$ with $N = \floor{T/\Delta t_L}$, defined analogously to the multilevel interaction estimator. By combining many coarse-level particles with fewer fine-level particles, the goal is that this estimator achieves the same accuracy as a single-level Monte Carlo estimator at level $L$ but at a lower cost.
    
    The algorithm can be implemented by iterating over levels $\ell=0, \ldots, L$ and, for each level, simulating the entire time interval $[0, T]$ using stored lower-level interaction contributions. Evolving all levels simultaneously is slightly more complicated but can be more memory-efficient.

\section{Analysis} \label{sec:analysis}
    To analyze the multilevel scheme in~\cref{eq:scheme}, we introduce the auxiliary particles
    \begin{equation}
        \bar X_{n+1}^{\ell,j} = \bar X_n^{\ell,j} + b(\bar X_n^{\ell,j}, \mathbb E[R(\bar X_n^{\ell,j})])\Delta t_\ell + \sigma(\bar X_n^{\ell,j}, \mathbb E[R(\bar X_n^{\ell,j})]) \Delta W_n^{\ell,j}
    \end{equation}
    with $\bar X_0^{\ell,j} = X_0^{\ell,j}$ and
    \begin{equation}
        \bar{\widetilde{X}}_{n+1}^{\ell,j} = \bar{\widetilde{X}}_n^{\ell,j} + b(\bar{\widetilde{X}}_n^{\ell,j}, \mathbb E[R(\bar{\widetilde{X}}_n^{\ell,j})])\Delta t_{\ell-1} + \sigma(\bar{\widetilde{X}}_n^{\ell,j}, \mathbb E[R(\bar{\widetilde{X}}_n^{\ell,j})]) \Delta \widetilde W_n^{\ell,j}
    \end{equation}
    with $\bar{\widetilde{X}}_0^{\ell,j} = \widetilde X_0^{\ell,j}$. These are mean-field particles that do not interact and are independent of each other, except for the pairs driven by the same Brownian motions. We will use them as a stepping stone to analyze the error of the multilevel scheme, noting also that
    \begin{equation} \label{eq:law-equality}
        \bar X_n^{\ell,j} \overset{d}{=} \bar{\widetilde X}_n^{\ell+1,j}, \qquad 0\le\ell\le L-1,
    \end{equation}
    since both are identical Euler--Maruyama discretizations. We introduce ensembles $\smash{\mathbf{\bar x}_n^\ell}$, $\smash{\mathbf{\bar{\widetilde x}}_n^\ell}$, and $\smash{\mathbf{\bar x}_n^{{\mathrm{ML}},\ell}}$ analogously to the non-mean-field ensembles.

    Throughout this section, we will omit the superscript $j$ when it is not relevant. For a random vector or matrix $Y$ we write $\normZZZZ{Y}_p \coloneqq ({\mathbb{E}}[\absZZZZ{Y}^p])^{1/p}$; $\absZZZZ{\,\cdot\,}$ denotes the Euclidean norm on $\mathbb R^d$ and the Frobenius norm on matrices.\clearpage
    
    \subsection{Assumptions and auxiliary results}
    We impose Lipschitz and initial-moment assumptions on the McKean--Vlasov~SDE.\hspace{-3cm}
    \begin{assumption} \label{ass:standing}
        Throughout the analysis we fix $p \ge 2$. There exist constants $L_b$, $L_\sigma$, and $L_R$ such that, for all
        $x, x'\in\mathbb R^d$ and $y, y'\in\mathbb R^{d_R}$,
        \begin{subequations}
        \begin{align}
            \absZZZZ{b(x, y) - b(x', y')} &\le L_b\left(\absZZZZ{x - x'} + \absZZZZ{y - y'}\right), \\
            \absZZZZ{\sigma(x, y) - \sigma(x', y')} &\le L_\sigma\left(\absZZZZ{x - x'} + \absZZZZ{y - y'}\right), \\
            \absZZZZ{R(x) - R(x')} &\le L_R\,\absZZZZ{x - x'}.
        \end{align}
        \end{subequations}
        The initial distribution $\rho_0$ has a finite moment of order $p$: ${\mathbb{E}}[\absZZZZ{X_0}^p] < \infty$.
    \end{assumption}

    \Cref{ass:standing} implies that $\absZZZZ{b(x,y)} \le \absZZZZ{b(0,0)} + L_b(\absZZZZ{x} + \absZZZZ{y})$ and $\absZZZZ{\sigma(x,y)} \le \absZZZZ{\sigma(0,0)} + L_\sigma(\absZZZZ{x} + \absZZZZ{y})$; the following strong-convergence result follows from \cite{liu2024particle}.

    \begin{proposition} \label{prop:strong-conv}
        Under \cref{ass:standing}, \cref{eq:mckv} has a unique strong solution $X$ with $\sup_{t \in [0, T]} \normZZZZ{X_t}_p < \infty$. There is a constant $C_{\mathrm{sc}}$, depending only on $p$ and the problem data $b, \sigma, R, \rho_0, T$, such that for any level $\ell \ge 0$, the exact mean-field Euler--Maruyama scheme $\bar X^{\ell,j}$ for \cref{eq:mckv} with step size $\Delta t_\ell \le \Delta t_0$, driven by the initial value $X_0^{\ell,j}$ and Brownian motion $W^{\ell,j}$ coupled to $X$, satisfies
        \begin{equation} \label{eq:strong-conv}
            \left({\mathbb{E}}\left[\max_{0 \le n \le \floor{T/{\Delta t_\ell}}} \absZZZZ{\bar X_n^{\ell,j} - X_{n\Delta t_\ell}}^p\right]\right)^{1/p} \le C_{\mathrm{sc}}\, {\Delta t_\ell}^{1/2}.
        \end{equation}
        An analogous bound holds for the coarse $\bar{\widetilde{X}}^{\ell,j}$ when replacing $\Delta t_\ell$ with $\Delta t_{\ell-1}$.
    \end{proposition}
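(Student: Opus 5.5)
We prove well-posedness first and then the strong error bound; both follow the classical route for McKean--Vlasov equations under global Lipschitz assumptions (as in \cite{liu2024particle}).

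\emph{Well-posedness.} Take the space of continuous curves $m\colon[0,T]\to\mathbb R^{d_R}$. For a fixed curve $m$, the SDE $\mathrm dY_t = b(Y_t,m_t)\,\mathrm dt + \sigma(Y_t,m_t)\,\mathrm dW_t$ has Lipschitz coefficients with linear growth. It therefore has a unique strong solution $Y^m$ with $\sup_t\normZZZZ{Y^m_t}_p<\infty$, by Burkholder--Davis--Gundy (BDG) and Gronwall, using ${\mathbb{E}}[\absZZZZ{X_0}^p]<\infty$. Define $\Phi(m)_t \coloneqq {\mathbb{E}}[R(Y^m_t)]$. Synchronous coupling, Itô's formula, and Gronwall give
\begin{equation*}
    \sup_{s\le t}{\mathbb{E}}\absZZZZ{Y^m_s-Y^{m'}_s}^2 \le C\int_0^t \absZZZZ{m_s-m'_s}^2\,\mathrm ds .
\end{equation*}
Hence
\begin{equation*}
    \absZZZZ{\Phi(m)_t-\Phi(m')_t}^2 \le L_R^2\, C\int_0^t\absZZZZ{m_s-m'_s}^2\,\mathrm ds .
\end{equation*}
Iterating this estimate, or using a weighted sup-norm $\sup_t e^{-\lambda t}\absZZZZ{m_t}$, makes $\Phi$ a contraction. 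Its unique fixed point yields the unique strong solution, and the moment bound carries over from $Y^m$.

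\emph{Strong error.} Set $e_n = \bar X_n^{\ell,j} - X_{n\Delta t_\ell}$ and write the scheme in continuous-time form with frozen arguments $\eta(s)=\floor{s/\Delta t_\ell}\Delta t_\ell$. The drift then involves $b(\bar X_{\eta(s)},{\mathbb{E}}[R(\bar X_{\eta(s)})])$, to be compared with $b(X_s,{\mathbb{E}}[R(X_s)])$. Split each difference as
\begin{equation*}
    \bigl(\bar X_{\eta(s)}-X_{\eta(s)}\bigr) + \bigl(X_{\eta(s)}-X_s\bigr)
\end{equation*}
in the state, and likewise in the measure argument. The measure term satisfies
\begin{equation*}
    \absZZZZ{{\mathbb{E}}[R(\bar X)]-{\mathbb{E}}[R(X)]}\le L_R\normZZZZ{\bar X-X}_1\le L_R\normZZZZ{\bar X-X}_p .
\end{equation*}
It is deterministic and thus controlled by the same error quantity; this is the only McKean--Vlasov-specific ingredient. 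Next apply Hölder to the $\mathrm dt$ integral and BDG to the stochastic integral, taking the maximum over $n$. This gives, for $\varepsilon(t)\coloneqq{\mathbb{E}}\max_{n\Delta t_\ell\le t}\absZZZZ{e_n}^p$,
\begin{equation*}
    \varepsilon(t)\le C\int_0^t \varepsilon(s)\,\mathrm ds + C\sup_s\normZZZZ{X_s-X_{\eta(s)}}_p^p .
\end{equation*}
The time-regularity bound $\normZZZZ{X_s-X_r}_p\le C\absZZZZ{s-r}^{1/2}$ follows from the moment bound, linear growth, and BDG. Gronwall then gives $\varepsilon(T)^{1/p}\le C_{\mathrm{sc}}\Delta t_\ell^{1/2}$. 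Every constant depends only on $p,b,\sigma,R,\rho_0,T$, and $\Delta t_\ell\le\Delta t_0$ keeps it uniform in $\ell$. The coarse bound for $\bar{\widetilde X}^{\ell,j}$ is identical with step $\Delta t_{\ell-1}$ and the same Brownian motion $W^{\ell,j}$.

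The main obstacle is bookkeeping rather than a conceptual step. We must make sure the Gronwall argument runs on the running maximum, which is possible because the measure term is deterministic and bounded by $\varepsilon^{1/p}$. We must also check that the moment bounds for the scheme itself are uniform in $\Delta t_\ell$, which again follows from a discrete Gronwall argument using linear growth.
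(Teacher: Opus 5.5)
The paper gives no argument of its own for this proposition. It only states that the result follows from \cite{liu2024particle}, and it quotes the H\"older bound \cref{eq:holder} separately from the SDE literature.

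Your proposal replaces the citation with the classical self-contained proof, and it is correct. The contraction of $m\mapsto{\mathbb{E}}[R(Y^m_\cdot)]$ in a weighted sup-norm gives existence and uniqueness. The Gr\"onwall argument on the running maximum closes because, for interactions of the form ${\mathbb{E}}[R(X_t)]$ with Lipschitz $R$, the law discrepancy is a deterministic number bounded by $L_R\normZZZZ{\bar X_{\eta(s)}-X_{\eta(s)}}_p$. So no Wasserstein machinery is needed. One small technicality: after It\^o's formula for $\absZZZZ{Y^m-Y^{m'}}^2$, the stochastic integral is only a local martingale when just second moments are available. Either localize, or use the It\^o isometry directly.

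What your route buys over the citation is transparency on the point the paper later relies on, namely that $C_{\mathrm{sc}}$ does not depend on the level. Your argument actually gives slightly more than you state. The consistency term $\sup_s\normZZZZ{X_s-X_{\eta(s)}}_p$ involves only the exact solution, so none of your constants depend on the step size, and the restriction $\Delta t_\ell\le\Delta t_0$ is unnecessary. Likewise, Gr\"onwall only needs the scheme's $p$th moments to be finite, not uniformly bounded in $\Delta t_\ell$. Finiteness follows immediately by induction from linear growth and the independence of $\Delta W_n$ from $\bar X_n$.

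The citation route, in exchange, is shorter and sits inside a general theory covering law dependence beyond ${\mathbb{E}}[R(X_t)]$. Its price is checking that $b(x,\int R\,\mathrm d\mu)$ meets the Wasserstein-Lipschitz hypotheses of that theory. This does hold, since $\absZZZZ{\int R\,\mathrm d\mu-\int R\,\mathrm d\nu}$ is at most $L_R$ times the $1$-Wasserstein distance between $\mu$ and $\nu$.
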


    The solution $X$ is also H\"older-$1/2$ continuous in time in $L^p$: there is a constant $C_{\mathrm H}$, depending only on $p$ and the problem data $b, \sigma, R, \rho_0, T$, such that
    \begin{equation} \label{eq:holder}
        \normZZZZ{X_t - X_s}_p \le C_{\mathrm H}\,(t - s)^{1/2}, \qquad 0 \le s \le t \le T.
    \end{equation}
    Indeed, $X$ solves a time-inhomogeneous It\^o SDE with globally Lipschitz, linearly growing coefficients and $\normZZZZ{X_0}_p < \infty$, for which \cref{eq:holder} is standard; see, e.g., \cite[Theorem 4.3]{mao2StochasticDifferential2011}.

    \begin{lemma} \label{lmm:mf-diffs-timestep}
        Under \cref{ass:standing} there exists a constant $C_\mathrm{Lem\ref*{lmm:mf-diffs-timestep}}$, depending only on $p$ and the problem data $b, \sigma, R, \rho_0, T, a$ -- in particular independent of $\ell$, $L$, and the time indices -- such that the following holds. For every level $\ell \ge 1$ and all indices $m, m' \ge 0$ with
        \begin{equation} \label{eq:mf-diffs-hyp}
            m\Delta t_\ell \le T, \qquad m'\Delta t_{\ell-1} \le T, \qquad \absZZZZ{m\Delta t_\ell - m'\Delta t_{\ell-1}} \le \Delta t_{\ell-1},
        \end{equation}
        the coupled mean-field particles satisfy
        \begin{equation} \label{eq:mf-diffs-bound}
            \normZZZZ{\bar X_m^{\ell} - \bar{\widetilde{X}}_{m'}^{\ell}}_p \le C_\mathrm{Lem\ref*{lmm:mf-diffs-timestep}}\,\Delta t_\ell^{1/2}.
        \end{equation}
        Choosing $m = n$ and $m' = \floor{n/a}$, for some $n$, recovers the coupling of a fine and corresponding coarse particle. The more general \cref{eq:mf-diffs-bound} will be needed later.
    \end{lemma}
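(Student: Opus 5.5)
The plan is to route both mean-field particles through the exact solution $X$ of \cref{eq:mckv}, driven by the same Brownian motion $W^{\ell,j}$ and initial value $X_0^{\ell,j}$, and split with the triangle inequality:
\begin{equation*}
    \normZZZZ{\bar X_m^{\ell} - \bar{\widetilde{X}}_{m'}^{\ell}}_p \le \normZZZZ{\bar X_m^{\ell} - X_{m\Delta t_\ell}}_p + \normZZZZ{X_{m\Delta t_\ell} - X_{m'\Delta t_{\ell-1}}}_p + \normZZZZ{X_{m'\Delta t_{\ell-1}} - \bar{\widetilde{X}}_{m'}^{\ell}}_p.
\end{equation*}
The first thing to check is that $\bar X^{\ell,j}$ and $\bar{\widetilde X}^{\ell,j}$ are exactly the mean-field Euler--Maruyama schemes covered by \cref{prop:strong-conv}, with step sizes $\Delta t_\ell$ and $\Delta t_{\ell-1}$, both at most $\Delta t_0$ since $\ell \ge 1$ and $a>1$. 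They must also use the same initial value and Brownian increments as the $X$ to which they are coupled. This holds by construction: $\bar{\widetilde X}_0^{\ell,j} = \widetilde X_0^{\ell,j} = X_0^{\ell,j} = \bar X_0^{\ell,j}$, and $\Delta\widetilde W_n^{\ell,j}$ consists of increments of the same $W^{\ell,j}$ over the coarse grid. So a single exact solution $X$ driven by $(X_0^{\ell,j}, W^{\ell,j})$ serves for both comparisons.

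For the first term, the hypothesis $m\Delta t_\ell \le T$ gives $m \le \floor{T/\Delta t_\ell}$. The maximum in \cref{eq:strong-conv} dominates the value at index $m$, so this term is at most $C_{\mathrm{sc}}\Delta t_\ell^{1/2}$. For the third term, $m'\Delta t_{\ell-1}\le T$, and the coarse version of \cref{prop:strong-conv} gives the bound $C_{\mathrm{sc}}\Delta t_{\ell-1}^{1/2} = C_{\mathrm{sc}} a^{1/2}\Delta t_\ell^{1/2}$. For the middle term, both times lie in $[0,T]$ and differ by at most $\Delta t_{\ell-1}$, so the Hölder bound \cref{eq:holder} (applied with $s,t$ ordered appropriately) gives at most $C_{\mathrm H}\Delta t_{\ell-1}^{1/2} = C_{\mathrm H}a^{1/2}\Delta t_\ell^{1/2}$.

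Summing, we can take $C_\mathrm{Lem\ref*{lmm:mf-diffs-timestep}} = C_{\mathrm{sc}}(1+a^{1/2}) + C_{\mathrm H}a^{1/2}$. This depends only on $p$, $b$, $\sigma$, $R$, $\rho_0$, $T$ and $a$, and not on $\ell$, $L$, $m$ or $m'$, as claimed.

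The only delicate point is the coupling bookkeeping. We must make sure that \cref{prop:strong-conv}, applied to the coarse particle, compares it to the same realization of $X$ as the fine particle; otherwise the middle term would not be a pathwise Hölder increment. Since the proposition is stated for the scheme driven by $W^{\ell,j}$ coupled to $X$, and both particles share $W^{\ell,j}$ and $X_0^{\ell,j}$, this is satisfied. No separate Grönwall argument should be needed.
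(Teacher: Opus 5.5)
Your proposal is correct and matches the paper's proof: both use the same three-term triangle inequality through the exact solution $X$ driven by the shared $(X_0^{\ell,j}, W^{\ell,j})$, apply \cref{prop:strong-conv} to the fine and coarse mean-field particles, and apply \cref{eq:holder} to the middle term. You also arrive at the same constant, $C_{\mathrm{sc}} + (C_{\mathrm H} + C_{\mathrm{sc}})a^{1/2}$.
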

    \begin{proof}
        Let $X$ be the exact solution of \cref{eq:mckv} driven by the common initial value $X_0 = \bar X_0^{\ell} = \bar{\widetilde{X}}_0^{\ell}$ and Brownian motion $W^{\ell}$ underlying both $\bar X^{\ell}$ and $\bar{\widetilde{X}}^{\ell}$, and set $t \coloneqq m\Delta t_\ell$ and $t' \coloneqq m'\Delta t_{\ell-1}$. Both $\bar X^{\ell}$ (step $\Delta t_\ell \le \Delta t_0$) and $\bar{\widetilde{X}}^{\ell}$ (step $\Delta t_{\ell-1} = a\Delta t_\ell \le \Delta t_0$) are exact mean-field Euler--Maruyama schemes for \cref{eq:mckv} coupled to $X$ through $(X_0, W^{\ell})$. Since \cref{eq:mf-diffs-hyp} gives $m \le \floor{T/\Delta t_\ell}$ and $m' \le \floor{T/\Delta t_{\ell-1}}$, \cref{prop:strong-conv} shows that
        \begin{equation}
            \normZZZZ{\bar X_m^{\ell} - X_t}_p \le C_{\mathrm{sc}}\,\Delta t_\ell^{1/2}, \qquad \text{and} \qquad
            \normZZZZ{\bar{\widetilde{X}}_{m'}^{\ell} - X_{t'}}_p \le C_{\mathrm{sc}}\,a^{1/2}\Delta t_\ell^{1/2}.
        \end{equation}
        By \cref{eq:holder}, $\normZZZZ{X_t - X_{t'}}_p \le C_{\mathrm H}\absZZZZ{t - t'}^{1/2} \le C_{\mathrm H}\,a^{1/2}\Delta t_\ell^{1/2}$. Therefore,
        \begin{equation}
        \begin{aligned}
            \normZZZZ{\bar X_m^{\ell} - \bar{\widetilde{X}}_{m'}^{\ell}}_p &\le \normZZZZ{\bar X_m^{\ell} - X_t}_p + \normZZZZ{X_t - X_{t'}}_p + \normZZZZ{X_{t'} - \bar{\widetilde{X}}_{m'}^{\ell}}_p\\
            &\le \left(C_{\mathrm{sc}} + (C_{\mathrm H} + C_{\mathrm{sc}})a^{1/2}\right)\Delta t_\ell^{1/2},
        \end{aligned}
        \end{equation}
        proving \cref{eq:mf-diffs-bound}.
    \end{proof}

    \subsection{Main results}
    We will prove a coupling bound in \cref{thm:main}: mean-field particles $\bar X^{\ell,j}$~and~$\bar{\widetilde{X}}^{\ell,j}$ are close to their corresponding interacting counterparts $X^{\ell,j}$ and $\widetilde{X}^{\ell,j}$. The proof of this theorem is structured as follows. \begin{itemize}
        \item We expand the error between coupled particles in terms of summed errors at previous timesteps, added to the multilevel estimation error of ${\mathbb{E}}[R(\bar X_k^\ell)]$.
        \item We split the multilevel estimation error into an error due to interaction and a statistical MLMC error.
        \item Bounding the statistical error is classical in MLMC. However, the interaction error accumulates all lower-level errors at every timestep. To control it, we use the choice of $J_\ell$ in \cref{thm:main} to ensure that the summed errors are dominated by the finest-level error.
        \item We finish by bounding the resulting recurrence relations by an ODE and applying Gr\"onwall's inequality.
    \end{itemize}

    \begin{theorem}[Pathwise coupling] \label{thm:main}
        Let \cref{ass:standing} hold and consider any $q>1$ and $a^{-1/2} \ge \epsilon > 0$. Choose
        \begin{equation} \label{eq:par-ml}
            L = \floor{2\log_a(\epsilon^{-1})} \qquad \text{and} \qquad J_\ell = \floor{C_Jq^{2(L-\ell)} a^{L-\ell}}
        \end{equation}
        with $C_J \ge 1$. Consider the error
        \begin{equation}
            E_n^\ell \coloneqq \max(e_n^\ell,\, \tilde e_n^{\ell + 1}), \qquad 0 \le \ell \le L,
        \end{equation}
        with $\tilde e_n^{L+1} \coloneqq 0$ and with
        \begin{subequations}
        \begin{align}
            e_n^\ell &\coloneqq {\mathbb{E}}\left[\max_{m\le n} \absZZZZ{X_m^{\ell} - \bar X_m^{\ell}}^p\right] \qquad \text{for $0\le \ell \le L$},\\
            \tilde e_n^\ell &\coloneqq {\mathbb{E}}\left[\max_{m\le n} \absZZZZ{\widetilde X_m^{\ell} - \bar{\widetilde{X}}_m^{\ell}}^p\right] \qquad \text{for $1 \le \ell \le L$}.
        \end{align}
        \end{subequations}
        We have, for every $0\le\ell\le L$ and $n\Delta t_\ell\le T$, that
        \begin{equation}
            (E_n^\ell)^{1/p} \le C \epsilon / q^{L-\ell},
        \end{equation}
        with $C$ independent of $L$, $n$, $\ell$, and $\epsilon$.
    \end{theorem}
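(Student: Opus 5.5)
We compare each interacting particle with its mean-field counterpart driven by the same Brownian increments and the same initial value. Subtracting the two Euler--Maruyama recursions gives, for the fine particle at level $\ell$,
\begin{equation*}
X_{m}^{\ell} - \bar X_{m}^{\ell} = \sum_{k<m}\bigl(b(X_k^\ell, E_R^{\mathrm{ML}}(\mathbf x_k^{{\mathrm{ML}},\ell})) - b(\bar X_k^\ell, \mathbb E[R(\bar X_k^\ell)])\bigr)\Delta t_\ell + \sum_{k<m}\bigl(\sigma(\cdots)-\sigma(\cdots)\bigr)\Delta W_k^\ell .
\end{equation*}
We take the running maximum and apply H\"older's inequality to the drift sum and the discrete Burkholder--Davis--Gundy inequality to the martingale sum, together with the Lipschitz bounds of \cref{ass:standing}. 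This yields
\begin{equation*}
e_n^\ell \le C\Delta t_\ell\sum_{k<n}\bigl(e_k^\ell + \mathbb E\,\absZZZZ{E_R^{\mathrm{ML}}(\mathbf x_k^{{\mathrm{ML}},\ell}) - \mathbb E[R(\bar X_k^\ell)]}^p\bigr).
\end{equation*}
The constant $C$ depends only on $p,T,L_b,L_\sigma$. The analogous bound holds for $\tilde e_n^{\ell+1}$ with step $\Delta t_\ell$, because the coarse particles of level $\ell+1$ also use $E_R^{\mathrm{ML}}(\mathbf x^{{\mathrm{ML}},\ell})$. This is why $e^\ell$ and $\tilde e^{\ell+1}$ are grouped into $E^\ell$.

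Next we split the estimation error by inserting the mean-field ensemble:
\begin{equation*}
\absZZZZ{E_R^{\mathrm{ML}}(\mathbf x_k^{{\mathrm{ML}},\ell}) - \mathbb E[R(\bar X_k^\ell)]} \le \absZZZZ{E_R^{\mathrm{ML}}(\mathbf x_k^{{\mathrm{ML}},\ell}) - E_R^{\mathrm{ML}}(\bar{\mathbf x}_k^{{\mathrm{ML}},\ell})} + \absZZZZ{E_R^{\mathrm{ML}}(\bar{\mathbf x}_k^{{\mathrm{ML}},\ell}) - \mathbb E[R(\bar X_k^\ell)]}.
\end{equation*}

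\emph{Interaction term.} By the $L_R$-Lipschitz property and exchangeability, its $p$-th moment is bounded by $(\ell+1)^{p-1}L_R^p\sum_{\ell'\le\ell}\bigl(e^{\ell'}_{\lfloor k/a^{\ell-\ell'}\rfloor} + \tilde e^{\ell'}_{\lfloor k/a^{\ell-\ell'+1}\rfloor}\bigr)$. Regrouping, this is at most $C(\ell+1)^{p-1}\sum_{\ell'\le\ell}E^{\ell'}_{\lfloor k/a^{\ell-\ell'}\rfloor}$, using $\tilde e^{\ell'}$ with $e^{\ell'-1}$. The indices are fine because every time index corresponds to a time at most $k\Delta t_\ell\le T$.

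\emph{Statistical term.} By \cref{eq:law-equality} the telescoping sum has the correct mean $\mathbb E[R(\bar X_k^\ell)]$ up to the freezing offset. The frozen time $\lfloor k/a^{\ell-\ell'}\rfloor\Delta t_{\ell'}$ differs from $k\Delta t_\ell$ by at most $\Delta t_{\ell'}$. We therefore write each level's contribution as a centred i.i.d. average plus a bias. The Marcinkiewicz--Zygmund inequality, combined with \cref{lmm:mf-diffs-timestep} (whose general indices $m,m'$ are exactly for this purpose), gives a centred contribution of size $C J_{\ell'}^{-1/2}\Delta t_{\ell'}^{1/2}$ for $\ell'\ge1$ and $C J_0^{-1/2}$ for $\ell'=0$. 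The biases from freezing telescope to a difference between times $k\Delta t_\ell$ and a time within $\Delta t_\ell$ of it. By \cref{eq:holder}, applied through mean-field Euler--Maruyama weak-in-$L^1$ estimates, this difference is $O(\Delta t_\ell^{1/2})$ after composing with Lipschitz $R$. Since $\Delta t_{\ell'}\propto a^{-\ell'}$ and $J_{\ell'}\approx C_J q^{2(L-\ell')}a^{L-\ell'}$, the level-$\ell'$ term is $\lesssim a^{-L/2}q^{-(L-\ell')}\lesssim \epsilon q^{-(L-\ell')}$. Summing the geometric series over $\ell'\le\ell$ gives $C\epsilon q^{-(L-\ell)}$. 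The bias $\Delta t_\ell^{1/2}\sim a^{-\ell/2}$ is the troublesome piece: it is not $\lesssim\epsilon q^{-(L-\ell)}$ for small $\ell$. The statement is only consistent if that bias is absent. I expect it in fact cancels, because $\mathbb E[R(\bar X_k^\ell)]$ should be compared with the telescoped means, which evaluate exactly at the frozen coarse times. So I would redefine the target in the comparison as the expectation of the mean-field multilevel estimator itself and absorb the mismatch into the Lipschitz step. This is a point I would check carefully first.

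\emph{Closing the recursion.} Combining the pieces, and taking the maximum over fine and coarse errors,
\begin{equation*}
E_n^\ell \le C\Delta t_\ell\sum_{k<n}\Bigl(E_k^\ell + (\ell+1)^{p-1}\sum_{\ell'<\ell}E^{\ell'}_{\lfloor k/a^{\ell-\ell'}\rfloor} + \epsilon^pq^{-p(L-\ell)}\Bigr).
\end{equation*}
We proceed by induction on $\ell$ with the hypothesis $E^{\ell'}_\cdot\le (C_*\epsilon q^{-(L-\ell')})^p$. The lower-level sum is then at most $C_*^p\epsilon^pq^{-p(L-\ell)}(\ell+1)^{p-1}\sum_{j\ge1}q^{-pj}$.

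This is the main obstacle: a naive induction lets the constant grow with $\ell$, and the factor $(\ell+1)^{p-1}$ is not uniformly bounded. I would first try to avoid the power of $\ell$ by applying Minkowski in $L^p$ directly to the interaction sum instead of Jensen. This gives $\sum_{\ell'}(E^{\ell'})^{1/p}\le C_*\epsilon q^{-(L-\ell)}\sum_j q^{-j}$, a geometric factor $1/(q-1)$ with no $\ell$-dependence. I would then run Gr\"onwall on $(E^\ell_n)^{1/p}$ viewed as the solution of a linear integral inequality dominated by an ODE. This gives $(E_n^\ell)^{1/p}\le e^{CT}(C_0 + C_*C'/(q-1))\epsilon q^{-(L-\ell)}$, and a fixed point in $C_*$ exists only if $e^{CT}C'/(q-1)<1$.

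If that smallness fails, the fallback is to exploit that the lower-level contribution enters with a time lag. I would do a joint Gr\"onwall over all levels simultaneously, for the weighted quantity $\max_\ell q^{L-\ell}(E^\ell_{\lfloor t/\Delta t_\ell\rfloor})^{1/p}$ as a function of continuous time $t$. This quantity satisfies a single integral inequality with constant coefficients independent of $L$, so Gr\"onwall on it gives the claimed uniform constant $C$.
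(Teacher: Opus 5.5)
Up to the closed recursion your argument follows the paper's proof. That includes Jensen on the drift sum and discrete BDG on the martingale sum, the split into an interaction part and a statistical part, and Minkowski rather than Jensen on the interaction sum so that no $(\ell+1)^{p-1}$ appears. It also includes Marcinkiewicz--Zygmund with \cref{lmm:mf-diffs-timestep}, which gives a forcing $F_\ell\lesssim\epsilon q^{-(L-\ell)}$. Two of your hedges can be settled.

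\emph{There is no freezing bias.} In $E_R^{\mathrm{ML}}(\mathbf x_k^{{\mathrm{ML}},\ell})$, the level-$\ell'$ fine ensemble and the level-$(\ell'+1)$ coarse ensemble are both frozen at the same index $\lfloor k/a^{\ell-\ell'}\rfloor$ with the same step $\Delta t_{\ell'}$. By \cref{eq:law-equality} their means cancel exactly, and only $\mathbb E[R(\bar X^\ell_k)]$ survives, since the top index is $\lfloor k/a^0\rfloor=k$. Nothing needs to be redefined.

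\emph{Your primary closing step does fail.} The induction on $\ell$ with a fixed $C_*$ inserts the lower-level bound at its terminal-time value. Closing it therefore needs a smallness condition of the form $KTe^{KT}(q-1)^{-p}<1$. That is not available in general and always fails as $q\downarrow1$. The fallback is therefore the proof, not a contingency.

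The fallback is correct, and it is a more direct route than the paper's. Run it on $p$-th powers, because the recursion is linear in $E$, not in $E^{1/p}$. Set $g_\ell(t)\coloneqq E^\ell_{\lfloor t/\Delta t_\ell\rfloor}$, which is nondecreasing because of the running maximum, so the sums can be dominated by integrals. Then $G(t)\coloneqq\max_{\ell}q^{p(L-\ell)}g_\ell(t)$ satisfies
\[
G(t)\le K\int_0^t\Bigl[\bigl(1+(q-1)^{-p}\bigr)G(s)+C\epsilon^p\Bigr]\,\mathrm ds .
\]
This holds because $q^{L-\ell}g_{\ell'}(s)^{1/p}\le q^{-(\ell-\ell')}G(s)^{1/p}$ for $\ell'<\ell$, and $q^{p(L-\ell)}F_\ell^p\lesssim\epsilon^p$. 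Gr\"onwall then gives the theorem; $G$ is bounded on $[0,T]$ for each fixed $L$, which is all Gr\"onwall needs. What you actually use is causality (lower levels are read at times $\le k\Delta t_\ell$), not a time lag as such.

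The paper reaches the same kind of inequality differently. It replaces $E^\ell$ by dominating solutions $P^\ell$ of the level-wise integral equations. It then proves by induction on $\ell$ the pointwise-in-time ordering $P^{\ell'}(t)\le(F_{\ell'}/F_\ell)^pP^\ell(t)$ for $\ell'<\ell$, using that $F_\ell/F_{\ell-1}$ decreases in $\ell$. Substituting this gives a single-level Gr\"onwall inequality with rate $K''(1+(q-1)^{-p})$.

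The mechanism is the same in both routes. Lower-level errors are bounded at the same time $s$, so they enter the Gr\"onwall rate rather than the forcing. That is what makes $C$ independent of $L$, and also why it blows up as $q\to1$. Your version avoids the explicit solution formula, the auxiliary discrete system $P_n^\ell$, and the ratio-monotonicity check. The paper's version yields per-level comparison functions with weights $F_\ell^{-p}$ matched exactly to the forcing.
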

    The cost of running this algorithm to a fixed time satisfies
    \begin{equation}
        \mathrm{Cost} \in \mathcal O\left(\sum_{\ell=0}^L J_\ell a^\ell\right) \subseteq \mathcal O\left(a^L q^{2L}\right) \subseteq \mathcal O\left(\epsilon^{-2 - 4\log_aq}\right).
    \end{equation}
    By choosing $q>1$ close enough to $1$, a cost of $\mathcal O(\epsilon^{-2 - \delta})$ can be achieved for any $\delta > 0$ at the cost of increasing $C$.
    \begin{corollary}[Estimation error] \label{cor:estimator}
        Let \cref{ass:standing} hold and let $q$, $\epsilon$, $L$, and $J_\ell$ be as in \cref{thm:main}. Let $\varphi\colon\mathbb R^d\to\mathbb R^{d_\varphi}$ be Lipschitz with constant $L_\varphi$ and set $N \coloneqq \floor{T/\Delta t_L}$. Then, using the multilevel estimator $E_\varphi^{\mathrm{ML}}$ defined as in \cref{eq:ml-int},
        \begin{equation}
            \normZZZZ[\big]{E_\varphi^{\mathrm{ML}}(\mathbf x_N^{{\mathrm{ML}},L}) - {\mathbb{E}}[\varphi(X_T)]}_p \le C'\epsilon
        \end{equation}
        with $C'$ independent of $L$ and $\epsilon$, at the $\mathcal O(\epsilon^{-2-4\log_aq})$ cost computed above.
    \end{corollary}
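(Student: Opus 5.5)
The corollary follows from a triangle-inequality split of the estimator error into three parts: a coupling error, a statistical error and a bias. Write $N_\ell \coloneqq \floor{N/a^{L-\ell}}$ and $\tilde N_\ell \coloneqq \floor{N/a^{L-\ell+1}}$ for the time indices appearing in $E_\varphi^{\mathrm{ML}}(\mathbf x_N^{{\mathrm{ML}},L})$. We then split
\begin{equation*}
E_\varphi^{\mathrm{ML}}(\mathbf x_N^{{\mathrm{ML}},L}) - {\mathbb{E}}[\varphi(X_T)] = \bigl(E_\varphi^{\mathrm{ML}}(\mathbf x_N^{{\mathrm{ML}},L}) - E_\varphi^{\mathrm{ML}}(\mathbf{\bar x}_N^{{\mathrm{ML}},L})\bigr) + \bigl(E_\varphi^{\mathrm{ML}}(\mathbf{\bar x}_N^{{\mathrm{ML}},L}) - {\mathbb{E}}[E_\varphi^{\mathrm{ML}}(\mathbf{\bar x}_N^{{\mathrm{ML}},L})]\bigr) + \bigl({\mathbb{E}}[E_\varphi^{\mathrm{ML}}(\mathbf{\bar x}_N^{{\mathrm{ML}},L})] - {\mathbb{E}}[\varphi(X_T)]\bigr).
\end{equation*}

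\textbf{Coupling term.} Each ensemble average is a convex combination, so Minkowski's inequality and the Lipschitz property of $\varphi$ bound the first term in $L^p$ by
\begin{equation*}
L_\varphi\sum_{\ell=0}^L \bigl((e^\ell_{N_\ell})^{1/p} + (\tilde e^{\ell}_{\tilde N_\ell})^{1/p}\bigr),
\end{equation*}
with the convention $\tilde e^0 \coloneqq 0$. The $e^\ell$ terms are controlled by $E^\ell$ at an index with $N_\ell\Delta t_\ell\le T$. The $\tilde e^{\ell}$ terms are controlled by $E^{\ell-1}$, because $E^{\ell-1}$ contains $\tilde e^{\ell}$ and uses the same time grid $\Delta t_{\ell-1}$. \Cref{thm:main} therefore gives a bound $2L_\varphi C\epsilon\sum_{k\ge0}q^{-k} = 2L_\varphi C\epsilon\, q/(q-1)$, which is uniform in $L$ because $q>1$.

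\textbf{Statistical term.} The mean-field particles are i.i.d.\ across $j$ and independent across levels. The centred term is therefore a sum of independent mean-zero level contributions, and the Marcinkiewicz--Zygmund (or Burkholder) inequality applies for $p\ge2$. The level-$0$ contribution has $L^p$-norm at most $C_p J_0^{-1/2}\normZZZZ{\varphi(\bar X^0_{N_0})}_p$. Here $\normZZZZ{\varphi(\bar X^0_{N_0})}_p$ is bounded via the linear growth of $\varphi$, \cref{prop:strong-conv}, and $\sup_t\normZZZZ{X_t}_p<\infty$. Since $J_0\gtrsim a^L\gtrsim\epsilon^{-2}$, this contribution is $\mathcal O(\epsilon)$.

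For $\ell\ge1$ the contribution is at most $C_p J_\ell^{-1/2}\normZZZZ{\varphi(\bar X^\ell_{N_\ell})-\varphi(\bar{\widetilde X}^\ell_{\tilde N_\ell})}_p$, up to a factor two from centring. I will check that $|N_\ell\Delta t_\ell - \tilde N_\ell\Delta t_{\ell-1}|\le\Delta t_{\ell-1}$. Because $\floor{\floor{x}/a}$ need not equal $\floor{x/a}$ for real $a$, this step may need a small argument, but the defect is at most one coarse step. \Cref{lmm:mf-diffs-timestep} then bounds the difference by $L_\varphi C\Delta t_\ell^{1/2}\propto a^{-\ell/2}$. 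Combined with $J_\ell^{-1/2}\lesssim q^{-(L-\ell)}a^{-(L-\ell)/2}$, this gives a per-level term of order $a^{-L/2}q^{-(L-\ell)}\lesssim \epsilon\, q^{-(L-\ell)}$. The floor in $J_\ell$ is harmless since $C_J\ge1$ gives $J_\ell\ge\tfrac12 C_Jq^{2(L-\ell)}a^{L-\ell}$. Summing the geometric series again gives $\mathcal O(\epsilon)$.

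\textbf{Bias.} The telescoping sum collapses in expectation using \cref{eq:law-equality}, which requires matching time indices. Here level $\ell-1$ uses $N_{\ell-1} = \floor{N/a^{L-\ell+1}} = \tilde N_\ell$, so the indices match and only ${\mathbb{E}}[\varphi(\bar X^L_N)]$ remains. Its distance to ${\mathbb{E}}[\varphi(X_T)]$ is at most $L_\varphi(\normZZZZ{\bar X^L_N - X_{N\Delta t_L}}_p + \normZZZZ{X_{N\Delta t_L}-X_T}_p)$. By \cref{prop:strong-conv} and \cref{eq:holder}, this is $\lesssim\Delta t_L^{1/2}\lesssim a^{-L/2}\lesssim\epsilon$, using $L\ge 2\log_a\epsilon^{-1}-1$.

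I expect the main obstacle to be the statistical term, because it is the only place where the choice of $J_\ell$ enters beyond \cref{thm:main}. Care is needed with the time-index bookkeeping for non-integer $a$, so that \cref{lmm:mf-diffs-timestep} and \cref{eq:law-equality} are applied at genuinely matching time points. Adding the three bounds gives $C'\epsilon$, and the cost claim is exactly the estimate stated after \cref{thm:main}.
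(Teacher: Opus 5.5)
Your proposal is correct and follows the paper's proof essentially step for step. It uses the same three-way split: your centring at ${\mathbb{E}}[E_\varphi^{\mathrm{ML}}(\mathbf{\bar x}_N^{{\mathrm{ML}},L})]$ coincides with the paper's centring at ${\mathbb{E}}[\varphi(\bar X_N^L)]$ once you telescope via \cref{eq:law-equality}. It also uses the same reuse of the $A_k^\ell$ bound with $\tilde e^{\ell}$ absorbed into $E^{\ell-1}$, Marcinkiewicz--Zygmund plus \cref{lmm:mf-diffs-timestep} for the statistical term, and strong convergence plus H\"older continuity for the bias.

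Your worry about iterated floors is unnecessary. Both indices $\floor{N/a^{L-\ell}}$ and $\floor{N/a^{L-\ell+1}}$ are computed directly from $N$, so the two times lie in $(N\Delta t_L - \Delta t_{\ell-1}, N\Delta t_L]$, exactly as the paper argues.
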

    \begin{proofof}{Proof of \cref{thm:main}}
        We can bound
        \begin{equation}
        \begin{aligned}
            e_n^\ell &\le 2^{p-1}{\mathbb{E}}\left[\max_{m\le n} \absZZZZ[\Bigg]{\sum_{k=0}^{m-1}(b(X_k^{\ell}, E_R^{\mathrm{ML}}(\mathbf x_k^{{\mathrm{ML}},\ell})) - b(\bar X_k^{\ell}, \mathbb E[R(\bar X_k^{\ell})]))\Delta t_\ell}^p\right]\\
            &\quad+ 2^{p-1}{\mathbb{E}}\left[\max_{m\le n} \absZZZZ[\Bigg]{\sum_{k=0}^{m-1}(\sigma(X_k^{\ell}, E_R^{\mathrm{ML}}(\mathbf x_k^{{\mathrm{ML}},\ell})) - \sigma(\bar X_k^{\ell}, \mathbb E[R(\bar X_k^{\ell})]))\Delta W_k^{\ell,j}}^p\right],
        \end{aligned}
        \end{equation}
        and now handle both terms separately: $e_n^\ell \le 2^{p-1} \mathrm{I} + 2^{p-1} \mathrm{II}$.

        \paragraph{Drift term.} By the triangle inequality, $\max_{m\le n}\absZZZZ{\sum_{k=0}^{m-1}c_k\Delta t_\ell} \le \sum_{k=0}^{n-1}\absZZZZ{c_k}\Delta t_\ell$; then we apply Jensen's inequality (which introduces the factor $n^{p-1}$). We continue with
        \begin{equation}
        \begin{aligned}
            \mathrm{I} &\le n^{p-1}\Delta t_\ell^p\sum_{k=0}^{n-1}{\mathbb{E}}\left[\absZZZZ{b(X_k^{\ell}, E_R^{\mathrm{ML}}(\mathbf x_k^{{\mathrm{ML}},\ell})) - b(\bar X_k^{\ell}, \mathbb E[R(\bar X_k^{\ell})])}^p\right]\\
            &\le T^{p-1}L_b^p2^{p-1}\Delta t_\ell \sum_{k=0}^{n-1}\left({\mathbb{E}}\left[\absZZZZ{X_k^{\ell} - \bar X_k^{\ell}}^p\right] + {\mathbb{E}}\left[\absZZZZ{E_R^{\mathrm{ML}}(\mathbf x_k^{{\mathrm{ML}},\ell}) - \mathbb E[R(\bar X_k^{\ell})]}^p\right]\right)\\
            &\le T^{p-1}L_b^p2^{p-1}\Delta t_\ell \sum_{k=0}^{n-1}\left(E_k^\ell + {\mathbb{E}}\left[\absZZZZ{E_R^{\mathrm{ML}}(\mathbf x_k^{{\mathrm{ML}},\ell}) - \mathbb E[R(\bar X_k^{\ell})]}^p\right]\right).
        \end{aligned}
        \end{equation}

        \paragraph{Diffusion term.} By the discrete Burkholder--Davis--Gundy inequality (with constant $C_p$) applied to the sum, which is a martingale in $m$, we obtain
        \begin{equation*}
        \begin{aligned}
            \mathrm{II} &\le C_p {\mathbb{E}}\left[\left(\sum_{k=0}^{n-1} \absZZZZ[\big]{\sigma(X_k^{\ell}, E_R^{\mathrm{ML}}(\mathbf x_k^{{\mathrm{ML}},\ell})) - \sigma(\bar X_k^{\ell}, \mathbb E[R(\bar X_k^{\ell})])}^2\,\absZZZZ{\Delta W_k^{\ell}}^2\right)^{p/2}\right]\\
            &\le C_p n^{p/2-1}\sum_{k=0}^{n-1}{\mathbb{E}}\left[\absZZZZ[\big]{\sigma(X_k^{\ell}, E_R^{\mathrm{ML}}(\mathbf x_k^{{\mathrm{ML}},\ell})) - \sigma(\bar X_k^{\ell}, \mathbb E[R(\bar X_k^{\ell})])}^p\,\absZZZZ{\Delta W_k^{\ell}}^p\right]\\
            &\le C_p C_p' T^{p/2-1} \Delta t_\ell \sum_{k=0}^{n-1}{\mathbb{E}}\left[\absZZZZ[\big]{\sigma(X_k^{\ell}, E_R^{\mathrm{ML}}(\mathbf x_k^{{\mathrm{ML}},\ell})) - \sigma(\bar X_k^{\ell}, \mathbb E[R(\bar X_k^{\ell})])}^p\right]\\
            &\le C_p C_p' T^{p/2-1} L_\sigma^p 2^{p-1} \Delta t_\ell \sum_{k=0}^{n-1}\left(E_k^\ell + {\mathbb{E}}\left[\absZZZZ[\big]{E_R^{\mathrm{ML}}(\mathbf x_k^{{\mathrm{ML}},\ell}) - \mathbb E[R(\bar X_k^{\ell})]}^p\right]\right),
        \end{aligned}
        \end{equation*}
        where the third inequality used the fact that the Gaussian increments are independent of the paths and have $p$th moments $C_p'\Delta t_\ell^{p/2}$.

        \paragraph{Multilevel interaction.} The next step is bounding the difference between the empirical multilevel and mean-field interaction terms, which occurs in the upper bounds on both the drift error and the diffusion error. We have
        \begin{equation*}
        \begin{aligned}
            &{\mathbb{E}}\left[\absZZZZ{E_R^{\mathrm{ML}}(\mathbf x_k^{{\mathrm{ML}},\ell}) - \mathbb E[R(\bar X_k^{\ell})]}^p\right]\\
            &\quad \le 2^{p-1}{\mathbb{E}}\left[\absZZZZ{E_R^{\mathrm{ML}}(\mathbf x_k^{{\mathrm{ML}},\ell}) - E_R^{\mathrm{ML}}(\mathbf{\bar x}_k^{{\mathrm{ML}},\ell})}^p\right] + 2^{p-1}{\mathbb{E}}\left[\absZZZZ{E_R^{\mathrm{ML}}(\mathbf{\bar x}_k^{{\mathrm{ML}},\ell}) - \mathbb E[R(\bar X_k^{\ell})]}^p\right]\\
            &\quad \eqqcolon 2^{p-1}(A_k^\ell)^p + 2^{p-1}(B_k^\ell)^p,
        \end{aligned}
        \end{equation*}
        where $A_k^\ell$ contains errors due to the interaction and $B_k^\ell$ contains the statistical error. We handle both terms separately, starting with the interaction error:
        \begin{align*}
            A_k^\ell &= \Big\|\sum\nolimits_{\ell'=0}^\ell \Bigl(E_R(\mathbf x_{\floor{k/{a^{\ell - \ell'}}}}^{\ell'})- E_R(\mathbf{\bar x}_{\floor{k/{a^{\ell - \ell'}}}}^{\ell'})\Bigr)\\
                    &\qquad- \sum\nolimits_{\ell'=1}^\ell\Bigl(E_R(\mathbf{\widetilde x}_{\floor{k/{a^{\ell - \ell' + 1}}}}^{\ell'}) - E_R(\mathbf{\bar{\widetilde{x}}}_{\floor{k/{a^{\ell - \ell' + 1}}}}^{\ell'})\Bigr)\Big\|_p\\
            &\le \sum\nolimits_{\ell'=0}^\ell \normZZZZ[\Big]{E_R(\mathbf x_{\floor{k/{a^{\ell - \ell'}}}}^{\ell'}) - E_R(\mathbf{\bar x}_{\floor{k/{a^{\ell - \ell'}}}}^{\ell'})}_p\\
            &\qquad+ \sum\nolimits_{\ell'=1}^{\ell}\normZZZZ[\Big]{E_R(\mathbf{\widetilde x}_{\floor{k/{a^{\ell - \ell' + 1}}}}^{\ell'}) - E_R(\mathbf{\bar{\widetilde{x}}}_{\floor{k/{a^{\ell - \ell' + 1}}}}^{\ell'})}_p\\
            &\le L_R \sum\nolimits_{\ell'=0}^\ell \normZZZZ[\Big]{X_{\floor{k/{a^{\ell - \ell'}}}}^{\ell'} - \bar X_{\floor{k/{a^{\ell - \ell'}}}}^{\ell'}}_p + L_R\sum\nolimits_{\ell'=1}^{\ell}\normZZZZ[\Big]{\widetilde X_{\floor{k/{a^{\ell - \ell' + 1}}}}^{\ell'} - \bar{\widetilde{X}}_{\floor{k/{a^{\ell - \ell' + 1}}}}^{\ell'}}_p\\
            &\le L_R \sum\nolimits_{\ell'=0}^\ell \Bigl(e_{\floor{k/{a^{\ell - \ell'}}}}^{\ell'}\Bigr)^{1/p} + L_R\sum\nolimits_{\ell'=1}^{\ell}\Bigl(\tilde e_{\floor{k/{a^{\ell - \ell' + 1}}}}^{\ell'}\Bigr)^{1/p}\\
            &\le L_R \left(E^\ell_k\right)^{1/p} + 2 L_R \sum\nolimits_{\ell'=0}^{\ell-1} \Bigl(E_{\floor{k/{a^{\ell - \ell'}}}}^{\ell'}\Bigr)^{1/p}.
        \end{align*}

        Now we bound the statistical error $B_k^\ell$. The level-$0$ interaction has no coarse partner, so we separate it from the sum over $\ell'\ge1$; using \cref{eq:law-equality}, the mean-field expectations telescope to ${\mathbb{E}}[R(\bar X_k^\ell)]$, and
        \begin{equation*}
        \begin{aligned}
            B_k^\ell &= \bigg\|\left(E_R(\mathbf{\bar x}_{\floor{k/{a^\ell}}}^0) - {\mathbb{E}}[R(\bar X_{\floor{k/{a^\ell}}}^0)]\right)\\
            &\qquad+ \sum_{\ell'=1}^\ell\Big(E_R(\mathbf{\bar x}_{\floor{k/{a^{\ell - \ell'}}}}^{\ell'}) - E_R(\mathbf{\bar{\widetilde x}}_{\floor{k/{a^{\ell - \ell' + 1}}}}^{\ell'})\\
            &\qquad\qquad\quad- {\mathbb{E}}[R(\bar X_{\floor{k/{a^{\ell-\ell'}}}}^{\ell'})] + {\mathbb{E}}[R(\bar{\widetilde{X}}_{\floor{k/{a^{\ell-\ell'+1}}}}^{\ell'})]\Big)\bigg\|_p\\
            &\le \normZZZZ[\Big]{E_R(\mathbf{\bar x}_{\floor{k/{a^\ell}}}^0) - {\mathbb{E}}[R(\bar X_{\floor{k/{a^\ell}}}^0)]}_p\\
            &\qquad+ \sum_{\ell'=1}^\ell \Big\|E_R(\mathbf{\bar x}_{\floor{k/{a^{\ell - \ell'}}}}^{\ell'}) - E_R(\mathbf{\bar{\widetilde x}}_{\floor{k/{a^{\ell - \ell' + 1}}}}^{\ell'})\\
            &\qquad\qquad\quad- \Bigl({\mathbb{E}}[R(\bar X_{\floor{k/{a^{\ell-\ell'}}}}^{\ell'})] - {\mathbb{E}}[R(\bar{\widetilde{X}}_{\floor{k/{a^{\ell-\ell'+1}}}}^{\ell'})]\Bigr)\Big\|_p\\
            &\le C_\mathrm{MZ} J_0^{-1/2}\normZZZZ{R(\bar X_{\floor{k/{a^\ell}}}^0) - {\mathbb{E}}[R(\bar X_{\floor{k/{a^\ell}}}^0)]}_p\\
            &\qquad+ C_\mathrm{MZ}\sum_{\ell'=1}^\ell J_{\ell'}^{-1/2}\normZZZZ{R(\bar X_{\floor{k/{a^{\ell - \ell'}}}}^{\ell'}) - R(\bar{\widetilde{X}}_{\floor{k/{a^{\ell - \ell' + 1}}}}^{\ell'})}_p\\
            &\le 2 C_\mathrm{MZ} M_R J_0^{-1/2} + C_\mathrm{MZ} L_R \sum_{\ell'=1}^\ell J_{\ell'}^{-1/2}\normZZZZ{\bar X_{\floor{k/{a^{\ell - \ell'}}}}^{\ell'} - \bar{\widetilde{X}}_{\floor{k/{a^{\ell - \ell' + 1}}}}^{\ell'}}_p\\
            &\le 2 C_\mathrm{MZ} M_R J_0^{-1/2} + C_\mathrm{MZ} L_R C_\mathrm{Lem\ref*{lmm:mf-diffs-timestep}}\Delta t_0^{1/2} \sum_{\ell'=1}^\ell J_{\ell'}^{-1/2} a^{-\ell'/2}\\
            &\le C_B \sum\nolimits_{\ell'=0}^\ell J_{\ell'}^{-1/2} a^{-\ell'/2},
        \end{aligned}
        \end{equation*}
        where we have defined the two constants $M_R \coloneqq \sup_{n\Delta t_0 \le T}\normZZZZ{R(\bar X_n^0)}_p$ and $C_B \coloneqq \max\{2 C_\mathrm{MZ} M_R,\, C_\mathrm{MZ} L_R C_\mathrm{Lem\ref*{lmm:mf-diffs-timestep}}\Delta t_0^{1/2}\}$. The third line applies the Marcinkiewicz--Zygmund inequality with constant $C_\mathrm{MZ}$; the fourth uses the Lipschitz continuity of $R$ together with $\normZZZZ{R(\bar X_{\floor{k/{a^\ell}}}^0) - {\mathbb{E}}[R(\bar X_{\floor{k/{a^\ell}}}^0)]}_p \le 2 M_R$, and the fifth uses \cref{lmm:mf-diffs-timestep}. The latter applies since the condition \cref{eq:mf-diffs-hyp} is satisfied: both times lie in $(k\Delta t_\ell - \Delta t_{\ell'-1}, k\Delta t_\ell]$. Finiteness of $M_R$ follows from $\normZZZZ{R(\bar X_n^0)}_p \le \absZZZZ{R(0)} + L_R\normZZZZ{\bar X_n^0}_p$ and the moment bound of \cref{prop:strong-conv}.

        \paragraph{Similarly bounding $\tilde e_n^{\ell+1}$.} By analogous arguments, one can derive the same upper bound for $\tilde e_n^{\ell+1}$ as for $e_n^\ell$. The dynamics of the level-$(\ell+1)$ coarse particles differs from that of the level-$\ell$ fine particles only by not interacting with themselves. However, self-interaction does not influence the derivations. The state and interaction arguments of $b$ and $\sigma$ are isolated by the Lipschitz bound, leading to the same upper bounds on $\mathrm{I}$ and $\mathrm{II}$. As a result, the bound on $e_n^\ell$ also~holds~for~$E_n^\ell$.

        \paragraph{Bringing it together.} The numbers $K$ through $K''''$ will represent constants independent of $n$, $\ell$, $L$, and $\epsilon$. We compute
        \begin{equation*}
        \begin{aligned}
            E_n^\ell &\le K\Delta t_\ell\sum_{k=0}^{n-1}\left[E_k^\ell + (A_k^\ell)^p + (B_k^\ell)^p\right]\\
            &\le K'\Delta t_\ell\sum_{k=0}^{n-1}\left[E_k^\ell + \left(\sum_{\ell'=0}^{\ell-1} \left(E^{\ell'}_{\floor{k/{a^{\ell - \ell'}}}}\right)^{1/p}\right)^p + \left(\sum_{\ell'=0}^\ell J_{\ell'}^{-1/2}a^{-\ell'/2}\right)^p\right]\\
            &\le K''\Delta t_\ell\sum_{k=0}^{n-1}\left[E_k^\ell + \left(\sum_{\ell'=0}^{\ell-1} \left(E^{\ell'}_{\floor{k/{a^{\ell - \ell'}}}}\right)^{1/p}\right)^p + \left(\frac{q - q^{-\ell}}{q-1}\,\frac1{q^{L-\ell}a^{L/2}}\right)^p\right],
        \end{aligned}
        \end{equation*}
        where $K''$ absorbs a constant related to the floor function used in defining $J_{\ell'}$. We call the last term, the statistical error injected every timestep, $F_\ell^p$; its geometric decay with $q^{-(L-\ell)}$ is the vital element that will allow us to bound $E_n^\ell$. We define
        \begin{equation} \label{eq:def-P}
            E_n^\ell \le P_n^\ell \coloneqq K''\Delta t_\ell\sum_{k=0}^{n-1}\Biggl[P_k^\ell + \Biggl(\sum_{\ell'=0}^{\ell-1} \biggl(P^{\ell'}_{\floor{k/{a^{\ell - \ell'}}}}\biggr)^{1/p}\Biggr)^p + F_\ell^p\Biggr].
        \end{equation}
        We note that, since the elements in the outer sum of $P_n^\ell$ are increasing, $P_n^\ell \le P^\ell(n\Delta t_\ell)$ with
        \begin{equation} \label{eq:def-P-cont}
            P^\ell(t) = K''\int_0^t \Biggl[P^\ell(s) + \Biggl(\sum_{\ell'=0}^{\ell-1} \left(P^{\ell'}(s)\right)^{1/p}\Biggr)^p + F_\ell^p\Biggr]\,\mathrm ds
        \end{equation}
        with solution
        \begin{equation}
            P^\ell(t) = K''\int_0^t \exp(K''(t-s)) \Biggl[\Biggl(\sum_{\ell'=0}^{\ell-1}(P^{\ell'}(s))^{1/p}\Biggr)^p + F_\ell^p\Biggr]\mathrm ds.
        \end{equation}
        We will now prove by induction over $\ell$ that $P^\ell(t) \ge (F_\ell/F_{\ell-1})^p P^{\ell-1}(t)$ for all $t$ and $\ell$. For $\ell=1$, this is clear. For $\ell>1$, we use the induction hypothesis and the fact that $F_\ell/F_{\ell-1}$ is decreasing in $\ell$ (as is easily verified) to bound
        \begin{align*}
            &\left(\sum_{\ell'=0}^{\ell-1}(P^{\ell'}(s))^{1/p}\right)^p + F_\ell^p = \left((P^0(s))^{1/p} + \sum_{\ell'=0}^{\ell-2} (P^{\ell'+1}(s))^{1/p}\right)^p + F_\ell^p\\
            &\qquad\ge \left(\sum_{\ell'=0}^{\ell-2} \frac{F_{\ell'+1}}{F_{\ell'}}(P^{\ell'}(s))^{1/p}\right)^p + F_\ell^p \ge\frac{F_\ell^p}{F_{\ell-1}^p}\left[\left(\sum_{\ell'=0}^{\ell-2}(P^{\ell'}(s))^{1/p}\right)^p + F_{\ell-1}^p\right].
        \end{align*}
        This proves the desired inequality and, more broadly, $P^{\ell_1}(t) \ge (F_{\ell_1}/F_{\ell_2})^p P^{\ell_2}(t)$ for all $\ell_1\ge\ell_2$ and $t$.

        With this inequality, we can now bound $P^\ell$ proper. We bound \cref{eq:def-P-cont} using the inequality we just derived:
        \begin{equation}
            P^\ell(t) \le K''\int_0^t \biggl[\biggl(1+\biggl(\sum_{\ell'=0}^{\ell-1}\frac{F_{\ell'}}{{F_\ell}}\biggr)^p\biggr)P^\ell(s) + F_\ell^p\biggr]\mathrm ds.
        \end{equation}
        This sum of $F$ fractions is bounded by $\frac1{q-1}$. By the Gr\"onwall inequality, we conclude that
        \begin{equation}
            P^\ell(t) \le K'' F_\ell^p \int_0^t \exp\biggl(K''\left(1 + \frac1{(q-1)^p}\right) (t - s)\biggr)\mathrm ds \le K''' F_\ell^p.
        \end{equation}
        Since $E_n^\ell \le P_n^\ell \le P^\ell(n\Delta t_\ell) \le K''' F_\ell^p$, we conclude that (implicitly defining $C$)
        \begin{equation}
            (E_n^\ell)^{1/p} \le K'''' F_\ell = K'''' \frac{q - q^{-\ell}}{q-1}\,\frac1{q^{L-\ell}a^{L/2}} \le C \epsilon / q^{L - \ell},
        \end{equation}
        proving the theorem.
    \end{proofof}
    \begin{proofof}{Proof of \cref{cor:estimator}}
        This proof follows the structure of the multilevel interaction bound in the proof of \cref{thm:main}. By the triangle inequality,
        \begin{equation} \label{eq:cor-split}
        \begin{aligned}
            &\normZZZZ[\big]{E_\varphi^{\mathrm{ML}}(\mathbf x_N^{{\mathrm{ML}},L}) - {\mathbb{E}}[\varphi(X_T)]}_p \le \underbrace{\normZZZZ[\big]{E_\varphi^{\mathrm{ML}}(\mathbf x_N^{{\mathrm{ML}},L}) - E_\varphi^{\mathrm{ML}}(\mathbf{\bar x}_N^{{\mathrm{ML}},L})}_p}_{\text{(i) interaction}}\\
            &\qquad + \underbrace{\normZZZZ[\big]{E_\varphi^{\mathrm{ML}}(\mathbf{\bar x}_N^{{\mathrm{ML}},L}) - {\mathbb{E}}[\varphi(\bar X_N^{L})]}_p}_{\text{(ii) statistical}} + \underbrace{\absZZZZ[\big]{{\mathbb{E}}[\varphi(\bar X_N^{L})] - {\mathbb{E}}[\varphi(X_T)]}}_{\text{(iii) bias}}.
        \end{aligned}
        \end{equation}
        We bound the three terms separately.

        \paragraph{Term (i).} We can repeat the bound on $A_k^\ell$ with $(k, \ell) \leftarrow (N, L)$, and bounding $E_\varphi^{\mathrm{ML}}$ instead of $E_R^{\mathrm{ML}}$:
        \begin{equation}
            \normZZZZ[\big]{E_\varphi^{\mathrm{ML}}(\mathbf x_N^{{\mathrm{ML}},L}) - E_\varphi^{\mathrm{ML}}(\mathbf{\bar x}_N^{{\mathrm{ML}},L})}_p
            \le L_\varphi \left(E^L_N\right)^{1/p} + 2 L_\varphi \sum\nolimits_{\ell'=0}^{L-1} \Bigl(E_{\floor{N/{a^{L - \ell'}}}}^{\ell'}\Bigr)^{1/p}.
        \end{equation}
        \Cref{thm:main} then applies to each term, yielding
        \begin{equation}
            \text{(i)} \le L_\varphi C\epsilon\Bigl(1 + 2\sum\nolimits_{\ell'=0}^{L-1} q^{\ell'-L}\Bigr) \le L_\varphi C\left(1 + \frac2{q-1}\right)\epsilon.
        \end{equation}

        \paragraph{Term (ii).} This is the bound on $B_k^\ell$ with $(k, \ell) \leftarrow (N, L)$ and computing expectations of $\varphi$ instead of $R$. Hence, with $C_{B,\varphi} \coloneqq \max\{2C_\mathrm{MZ}M_\varphi,\, C_\mathrm{MZ}L_\varphi C_\mathrm{Lem\ref*{lmm:mf-diffs-timestep}}\Delta t_0^{1/2}\}$ where $M_\varphi \coloneqq \sup_{n\Delta t_0 \le T}\normZZZZ{\varphi(\bar X_n^0)}_p$,
        \begin{equation}
            \text{(ii)} \le C_{B,\varphi}\sum_{\ell'=0}^L J_{\ell'}^{-1/2}a^{-\ell'/2}
            \le \sqrt2\, C_{B,\varphi}\, \frac{q - q^{-L}}{q-1}\, a^{-L/2}
            \le \sqrt2\, C_{B,\varphi}\, \frac{q}{q-1}\, a^{1/2}\,\epsilon,
        \end{equation}
        where the last step used $a^{-L/2} \le a^{1/2}\epsilon$, since $L \ge 2\log_a(\epsilon^{-1}) - 1$.

        \paragraph{Term (iii).} Since $T - N\Delta t_L < \Delta t_L$, combining the Lipschitz continuity of $\varphi$, \cref{prop:strong-conv}, and \cref{eq:holder} gives
        \begin{equation}
        \begin{aligned}
            \text{(iii)} &\le L_\varphi {\mathbb{E}}[\absZZZZ[\big]{\bar X_N^{L} - X_T}] \le L_\varphi\left(\normZZZZ[\big]{\bar X_N^{L} - X_{N\Delta t_L}}_p + \normZZZZ[\big]{X_{N\Delta t_L} - X_T}_p\right)\\
            &\le L_\varphi\left(C_\mathrm{sc} + C_\mathrm{H}\right)\Delta t_L^{1/2} \le L_\varphi\left(C_\mathrm{sc} + C_\mathrm{H}\right)(a\Delta t_0)^{1/2}\,\epsilon.
        \end{aligned}
        \end{equation}
        Summing the three bounds proves the claim with
        \begin{equation}
            C' \coloneqq L_\varphi C\left(1 + \frac2{q-1}\right) + \sqrt2\,C_{B,\varphi}\frac{q}{q-1}a^{1/2} + L_\varphi(C_\mathrm{sc}+C_\mathrm{H})(a\Delta t_0)^{1/2}.
        \end{equation}
    \end{proofof}

\section{Numerical illustration} \label{sec:num}
    We briefly illustrate how the practical performance of the multilevel algorithm compares to the derived bounds. Consider the McKean--Vlasov SDE
    \begin{equation}
        \mathrm dX_t = 2\,(\cos({\mathbb{E}}[X_t]) - X_t)\,\mathrm dt + 1.05\,X_t\,\mathrm dW_t, \qquad X_0 \sim \mathcal N(1,0.25),
    \end{equation}
    where $\mathcal N(\mu, \sigma^2)$ denotes the normal distribution with mean $\mu$ and variance $\sigma^2$. We set $T = 2$, $a = 2$, and $\Delta t_0 = 0.5$; furthermore, as observable we use the function
    \begin{equation}
        \varphi(x) = \sin(7x).
    \end{equation}
    We use the multilevel algorithm with ensemble sizes that follow \cref{eq:par-ml}, fixing $C_J = 32$ and varying $q \in \{1,\,1.1,\,1.3,\,1.5\}$. For $q>1$, we will compare our errors to the $\mathcal O(\mathrm{Cost}^{-1/(2+4\log_aq)})$ upper bound from \cref{cor:estimator}. We note that the choice $q=1$ is not covered by that result, and that the constant of our upper bound quickly blows up to infinity as $q\to1$.

    We will compute a reference solution with a single-level simulation that uses level $L=15$ and an ensemble of $J = 2\cdot10^6$ samples. Then, for different pairs $(q, L)$, we run $400$ independent multilevel simulations and compute the root mean square error (RMSE) of $E_\varphi^{\mathrm{ML}}(\mathbf x_N^{{\mathrm{ML}},L})$ compared to the reference value. This RMSE is plotted against a relative cost measure,
    \begin{equation}
        J_0\floor{\frac T{\Delta t_0}} + \sum_{\ell=1}^L J_\ell\,\Bigl(\floor{\frac T{\Delta t_\ell}} + \floor{\frac T{\Delta t_{\ell-1}}}\Bigr),
    \end{equation}
    which accounts for updating every particle at every timestep, and evaluating $R$.

    The results are shown in \cref{fig:results}, together with the convergence rates of the upper bounds. We make three main observations. \begin{itemize}
        \item The derived rates for $q>1$ are closely tracked by the empirical errors. As a result, smaller $q$ values yield better convergence rates than larger ones. The smaller $q$ is chosen, the longer it takes until the asymptotic~regime~is~reached.
        \item Despite the fact that $q=1$ is not covered by the theory, it yields the best cost-error results in practice, narrowly edging out $q=1.1$.
        \item Whereas the constant factor in the upper bound blows up as $q\to1$, the empirical results show similar errors for all $q$ values at low cost. The trade-off between asymptotics and a constant prefactor that the theory permits is absent, making small $q$ values the clear best choice in this example.
    \end{itemize}

    \begin{figure}[t]
        \centering
        \pgfplotstableread[col sep=comma]{data/q=1.5.csv}\tableqA
        \pgfplotstableread[col sep=comma]{data/q=1.3.csv}\tableqB
        \pgfplotstableread[col sep=comma]{data/q=1.1.csv}\tableqC
        \begin{tikzpicture}
        \begin{groupplot}[group style={group size=1 by 1, horizontal sep=2cm, vertical sep=2cm},
        width=.9\textwidth, height=0.7\textwidth,
        legend style={transpose legend, legend columns=0, draw=none}]

        \nextgroupplot[name=mainplot, xmode=log, ymode=log, grid=major,
        xlabel={$\mathrm{Cost}$}, ylabel={$\mathrm{RMSE}$},
        legend to name=grouplegend, unbounded coords=discard,
        every axis plot/.append style={mark options={solid, scale=1.1, thick}}]

        \addplot[color=black, mark=*, very thin] table [x=cost, y=RMSE] {\tableqA};
        \addlegendentry{$q=1.5\;\;$}
        \getlastpoint{\tableqA}
        \pgfmathsetmacro{\slope}{-1 / (2 + 4 * log2(1.5))}
        \setstartX
        \addplot [color=black, dashed, very thick, forget plot, samples=2,
                domain=\startX:\lastX] {1.05*\lastY * (x/\lastX)^(\slope)};

        \addplot[color=black, mark=square*, very thin] table [x=cost, y=RMSE] {\tableqB};
        \addlegendentry{$q=1.3\;\;$}
        \getlastpoint{\tableqB}
        \pgfmathsetmacro{\slope}{-1 / (2 + 4 * log2(1.3))}
        \setstartX
        \addplot [color=black, dashed, very thick, forget plot, samples=2,
                domain=\startX:\lastX] {1.05*\lastY * (x/\lastX)^(\slope)};

        \addplot[color=black, mark=triangle*, very thin] table [x=cost, y=RMSE] {\tableqC};
        \addlegendentry{$q=1.1\;\;$}
        \getlastpoint{\tableqC}
        \pgfmathsetmacro{\slope}{-1 / (2 + 4 * log2(1.1))}
        \setstartX
        \addplot [color=black, dashed, very thick, forget plot, samples=2,
                domain=\startX:\lastX] {1.05*\lastY * (x/\lastX)^(\slope)};

        \addplot[color=black, mark=diamond*, very thin] table [x=cost, y=RMSE, col sep=comma]{data/q=1.0.csv};
        \addlegendentry{$q=1\;\;$}

        \end{groupplot}

        \node at (mainplot.south) [below, yshift=-3\pgfkeysvalueof{/pgfplots/every axis title shift}] {\ref*{grouplegend}};
        \end{tikzpicture}
        \vspace{-.3cm}
        \caption{Comparison of the empirical RMSE of the multilevel simulation algorithm for different $q$ values, with the dashed lines representing the convergence rates of the upper bounds derived in \cref{cor:estimator} for $q>1$.}
        \vspace{-.3cm}
        \label{fig:results}
    \end{figure}

    The code to reproduce this experiment is available at the Zenodo repository \url{https://doi.org/10.5281/zenodo.22098916}.

\section{Discussion} \label{sec:disc}
    We have proven a convergence result for a single-ensemble multilevel Monte Carlo algorithm for McKean--Vlasov SDEs, assuming interaction of the form ${\mathbb{E}}[R(X_t)]$ and globally Lipschitz coefficients and $R$. Since particles on different levels are not independent, a coupling argument must resort to bounding the multilevel estimation error with the triangle inequality (as in the bound on $A_k^\ell$ in our proof). A resulting $L$-dependent bound would accumulate in the Gr\"onwall inequality and make it infeasible to control the algorithm's overall error.
    
    We solved this issue by requiring geometrically decreasing errors $\sim q^{-(L-\ell)}$ on coarser levels $\ell$ with $q>1$, such that their sum over all levels is $L$-independent. This is a flexible approach that may also be useful for the methods in, e.g., \cite{bouillonSingleEnsembleMultilevelMonte2026a,chernov2021,hoelMultilevelEnsembleKalman2016} (after tweaking those schemes such that particles only use lower-level particles to compute the interaction term). The theoretical bound suggests a clear trade-off: the closer $q$ is to $1$, the better the asymptotic convergence rate but the larger the constant factor in the bound.

    Both the exclusion of $q=1$ from any theory and the exploding constant for $q\to1$ result from the application of the triangle inequality, which is more than likely not sharp. A simple numerical experiment is consistent with this interpretation, showing that lowering $q$ up to and including $1$ improves the performance in practice. Hence, while the theory is valuable by proving rates arbitrarily close to $\mathrm{Cost} \in \mathcal O(\epsilon^{-2})$, it does not yet fully capture the behavior of the algorithm.

    Further research is required to decrease the gap between theory and practice for $q\to1$. Equally valuable would be extensions of the methodology to coefficients that are not (globally) Lipschitz or to more general interaction terms, settings in which multiple-ensemble MLMC algorithms are often formulated. Lastly, a detailed performance analysis between different MLMC paradigms and algorithms for McKean--Vlasov equations is of clear interest.

\section*{Acknowledgments}
    We are grateful to Thijs Steel for his feedback, which improved the presentation of this manuscript. This work was financed by the Fonds Wetenschappelijk Onderzoek -- Vlaanderen (FWO) under grant 1169725N.

\bibliographystyle{spmpsci}
\bibliography{references}

\end{document}